\renewenvironment{proof}[1][\proofname] {\par\pushQED{\qed}\normalfont\topsep6\p@\@plus6\p@\relax\trivlist\item[\hskip\labelsep\bfseries#1\@addpunct{.}]\ignorespaces}{\popQED\endtrivlist\@endpefalse}
\newtheorem{proposition}{Proposition}[section]
\newtheorem{conjecture}[proposition]{Conjecture}
\newtheorem{lemma}[proposition]{Lemma}
\newtheorem{theorem}[proposition]{Theorem}
\theoremstyle{definition}
\newtheorem*{remark*}{Remark}
\newtheorem*{theorem*}{Theorem}
\newtheorem{claim}[proposition]{Claim}
\newcommand{\eps}{\varepsilon}
\newcommand{\PP}{\mathbb{P}}
\newcommand{\EE}{\mathbb{E}}
\newcommand{\ex}{\operatorname{ex}}
\newcommand{\type}{\operatorname{type}}
\newcommand{\subtype}{\operatorname{subtype}}
\renewcommand{\epsilon}{\varepsilon}
\title{}
\author{Barnab\'as Janzer\thanks{Mathematical Institute, University of Oxford, United Kingdom. Research supported by a fellowship at Magdalen College, and by ERC grant No.~947978 while the author was affiliated with the University of Warwick. Email: \mbox{\textbf{barnabas.janzer@magd.ox.ac.uk}}.}
	\and
Oliver Janzer\thanks{Department of Pure Mathematics and Mathematical Statistics, University of Cambridge, United Kingdom. Research supported by a fellowship at Trinity College. Email: \textbf{oj224@cam.ac.uk}.}
\and
Van Magnan\thanks{Department of Mathematical Sciences, University of Montana. Email: \textbf{van.magnan@umontana.edu}}
\and
Abhishek Methuku\thanks{Department of Mathematics, ETH Z\"urich,
Switzerland. Research supported by SNSF grant 200021\_196965.
Email: \mbox{\textbf{
abhishekmethuku@gmail.com}}} 
}
\date{\vspace{-21pt}}
\title{Tight general bounds for the extremal numbers of 0--1 matrices}
\begin{document}

\maketitle

\begin{abstract}
    A zero-one matrix $M$ is said to contain another zero-one matrix $A$ if we can delete some rows and columns of $M$ and replace some $1$-entries with $0$-entries such that the resulting matrix is $A$. The extremal number of $A$, denoted $\ex(n,A)$, is the maximum number of $1$-entries that an $n\times n$ zero-one matrix can have without containing $A$. The systematic study of this function for various patterns $A$ goes back to the work of F\"uredi and Hajnal from 1992, and the field has many connections to other areas of mathematics and theoretical computer science. The problem has been particularly extensively studied for so-called acyclic matrices, but very little is known about the general case (that is, the case where $A$ is not necessarily acyclic). We prove the first asymptotically tight general result by showing that if $A$ has at most~$t$ $1$-entries in every row, then $\ex(n,A)\leq n^{2-1/t+o(1)}$. This verifies a conjecture of Methuku and Tomon.

    Our result also provides the first tight general bound for the extremal number of vertex-ordered graphs with interval chromatic number $2$, generalizing a celebrated result of F\"uredi, and Alon, Krivelevich and Sudakov about the (unordered) extremal number of bipartite graphs with  maximum degree $t$ in one of the vertex classes.
\end{abstract}

\section{Introduction}

One of the most central problems in extremal graph theory is concerned with estimating the maximum number of edges that a graph on a given number of vertices can have without containing some given graph as a subgraph. Formally, the \emph{extremal number} (also known as the Tur\'an number) of a graph $H$, denoted as $\ex(n,H)$, is the maximum number of edges that a graph on $n$ vertices can have if it does not contain $H$ as a subgraph. The first result on this topic was obtained by Mantel \cite{mantel1907solution} in 1907, when he determined the value of $\ex(n,H)$ when $H$ is the complete graph on three vertices. In 1941, Tur\'an \cite{turan1941extremal} extended this to the case where $H$ is an arbitrary complete graph. The celebrated Erd\H os--Stone--Simonovits theorem~\cite{erdos1946structure,erdos1966limit} asserts that $\ex(n,H)=\left(1-\frac{1}{\chi(H)-1}+o(1)\right)\binom{n}{2}$, which determines the asymptotics of $\ex(n,H)$ whenever the chromatic number $\chi(H)$ of $H$ is at least $3$. However, this result only gives the weak estimate $o(n^2)$ for the extremal number of bipartite graphs. A slightly better upper bound can be obtained using the K\H ov\'ari--S\'os--Tur\'an theorem \cite{kHovari1954problem} which states that $\ex(n,K_{s,t})=O(n^{2-1/s})$. Getting good estimates for the extremal number of bipartite graphs is an important and notoriously challenging problem. The difficulty is well reflected by the fact that the order of magnitude of $\ex(n,H)$ is unknown even in very simple cases such as when $H$ is $K_{4,4}$, the complete bipartite graph with $4$ vertices on each side, or $C_8$, the cycle of length~$8$. One of the few tight general results is the following theorem of F\"uredi~\cite{furedi1991turan}. A different proof of this result was later found by Alon, Krivelevich and Sudakov~\cite{alon2003turan} as one of the first applications of the celebrated dependent random choice method.

\begin{theorem}[F\"uredi \cite{furedi1991turan}, Alon--Krivelevich--Sudakov \cite{alon2003turan}] \label{thm:furediAKS}
    Let $H$ be a bipartite graph with maximum degree~$t$ in one side of the bipartition. Then $\ex(n,H)=O(n^{2-1/t})$.
\end{theorem}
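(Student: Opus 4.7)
The plan is to use the \emph{dependent random choice} method. Fix the bipartite graph $H$ with parts $A$ and $B$, where every vertex of $A$ has degree at most $t$ in $H$, and set $m = |A|+|B|$. I will show that every graph $G$ on $n$ vertices with $e(G) \geq C n^{2-1/t}$ edges contains $H$, provided the constant $C = C(H)$ is chosen large enough.

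First, I would sample $t$ vertices $v_1,\dots,v_t$ independently and uniformly from $V(G)$ (with replacement) and set $U = \bigcap_{i=1}^t N(v_i)$. A standard convexity argument gives
\[
\EE[|U|] \;=\; \sum_{v \in V(G)} \Bigl(\tfrac{d(v)}{n}\Bigr)^t \;\geq\; n^{1-t}\Bigl(\tfrac{2e(G)}{n}\Bigr)^t \;\geq\; (2C)^t,
\]
where the middle step uses the power-mean inequality and the last step uses the hypothesis on $e(G)$. The fact that I am sampling exactly $t$ vertices is precisely what produces the tight exponent $2-1/t$.

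Second, I call a $t$-set $S \subseteq V(G)$ \emph{bad} if its common neighborhood in $G$ has fewer than $m$ vertices, and let $X$ count the bad $t$-subsets lying inside $U$. Since a fixed bad $S$ is contained in $U$ with probability $(|N_G(S)|/n)^t < (m/n)^t$, I get $\EE[X] \leq \binom{n}{t}(m/n)^t \leq m^t$. For $C$ large in terms of $m$ and $t$, this yields $\EE[|U|-X] \geq (2C)^t - m^t \geq m$. Fixing a realisation attaining this inequality and deleting one vertex from each bad $t$-subset of $U$, I obtain a set $U'$ of size at least $m$ with the property that every subset of $U'$ of size at most $t$ has at least $m$ common neighbors in $G$.

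Third, I embed $H$ into $G$ greedily: inject $A$ into $U'$ (possible since $|U'| \geq m \geq |A|$) and then process the vertices of $B$ one at a time. For each $w \in B$, the image of $N_H(w) \subseteq A$ consists of at most $t$ vertices of $U'$, whose common neighborhood in $G$ has at least $m$ vertices, while fewer than $m$ vertices of $G$ have been used so far, so a free vertex is available to host $w$. I do not anticipate any genuine obstacle here --- each step is standard once dependent random choice is in hand, and the only calibration required is the constant $C(H)$, which is fixed by the elementary inequality $(2C)^t \geq m^t + m$; the conceptual heart of the matter is the convexity lower bound for $\EE[|U|]$ in the first step.
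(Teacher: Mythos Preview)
The paper does not prove this theorem; it is quoted as a known background result of F\"uredi and of Alon--Krivelevich--Sudakov, the latter via dependent random choice, which is exactly the route you take. So there is no proof in the paper to compare against, and your overall strategy is the standard (and correct) one.

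There is, however, a genuine slip in your embedding step. You set things up so that every vertex of $A$ has degree at most $t$, but then you place $A$ into $U'$ and, for $w\in B$, assert that $N_H(w)\subseteq A$ has at most $t$ vertices. That is the wrong side: nothing in your hypothesis bounds the degrees of vertices in $B$. The correct move is to embed the side with \emph{unrestricted} degree into $U'$ first, and then embed the bounded-degree side greedily --- each such vertex has at most $t$ already-placed neighbours, and it is precisely for $t$-sets inside $U'$ that you have guaranteed a common neighbourhood of size at least $m$. Equivalently, swap the roles of $A$ and $B$ either in the hypothesis or in the embedding paragraph. With that correction the argument goes through exactly as you describe.
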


This result is tight when $H=K_{t,r}$ for some $r$ much larger than $t$ (see \cite{kollar1996norm,alon1999norm,bukh2021extremal}), but there have been recent improvements in the case where $H$ is $K_{t,t}$-free: see \cite{conlon2021extremal,janzer2018improved,sudakov2020turan}.

\subsection{Zero-one matrices}

In the 90's, motivated by various applications to problems in other areas of mathematics (some of which we will discuss shortly), researchers started developing an analogous extremal theory for zero-one matrices.
For zero-one matrices $A$ and $M$, we say that $M$ \textit{contains} $A$ if, by deleting some rows and columns from $M$, and possibly turning some of its $1$-entries to $0$-entries, we can obtain $A$. The \textit{weight} of a zero-one matrix $M$, denoted $w(M)$, is the number of $1$-entries in $M$. The \textit{extremal number} of a zero-one matrix $A$, denoted $\ex(n,A)$, is the maximum possible weight of an $n\times n$ zero-one matrix that does not contain $A$.

One of the first results on the topic was obtained by F\"uredi \cite{furedi1990maximum} in 1990 who determined the order of magnitude of the extremal number of a certain $2\times 3$ matrix, and used it to give a $O(n\log n)$ bound for the number of unit distances in a convex $n$-gon, thereby making significant progress on an old problem of Erd\H os and Moser \cite{erdosmoser}. This bound is still the best known, though the implicit constant has been improved \cite{aggarwal2015unit} (also using forbidden submatrix theory). A more systematic study of extremal numbers of zero-one matrices was initiated by F\"uredi and Hajnal \cite{furedi1992matrices} in 1992. Since then, the extremal theory of zero-one matrices has been very successful at resolving problems in combinatorics, discrete and computational geometry, structural graph theory and the analysis of data structures. We refer the reader to the paper of Pettie and Tardos \cite{pettie2024extremal}, and the surveys of Tardos~\cite{tardos2018extremal, MR3967297} for an excellent overview of the extremal theory of zero-one matrices and its many applications.

There is a natural relationship between zero-one matrices and bipartite graphs. For a matrix $A$, we define $H_A$ to be the bipartite graph obtained by taking a vertex for each row and column of $A$, and taking an edge between $u$ and $v$ if and only if $A(u,v)=1$. It is easy to prove that for any zero-one matrix $A$, we have
\begin{equation}
    \ex(n,A)=\Omega(\ex(n,H_A)). \label{eqn:matrix and unordered}
\end{equation}

Interestingly, disproving a conjecture of F\"uredi and Hajnal~\cite{furedi1992matrices} in a very strong sense, Pach and Tardos~\cite{pach2006forbidden} showed that $\ex(n,A)$ can be much larger than $\ex(n,H_A)$. More precisely, they proved that there are matrices $A$ for which $H_A=C_{2k}$ (for arbitrary $k$) and yet $\ex(n,A)=\Omega(n^{4/3})$. Since $\ex(n,C_{2k})=O(n^{1+1/k})$ (see \cite{bondy74cycles}), this implies a huge gap between $\ex(n,A)$ and $\ex(n,H_A)$ for these matrices, and demonstrates that proving upper bounds for the extremal number of zero-one matrices is even more difficult than the corresponding problem for bipartite graphs.

When $H_A$ is a forest, we say that $A$ is an \emph{acyclic matrix}. The extremal numbers of acyclic matrices have been extensively studied. F\"uredi and Hajnal \cite{furedi1992matrices} conjectured that for any permutation matrix~$P$, we have $\ex(n,P)=O(n)$. Klazar \cite{klazar2000furedi} showed that this would imply the well-known Stanley--Wilf conjecture on the number of permutations without forbidden patterns, and Marcus and Tardos~\cite{marcus2004excluded} proved these conjectures. 
Another conjecture posed by F\"uredi and Hajnal asserted that for any acyclic zero-one matrix $A$, we have $\ex(n,A)=O(n\log n)$. This was disproved by Pettie \cite{pettie2011degrees}, who showed that there are acyclic zero-one matrices $A$ such that $\ex(n,A)=\Omega(n\log n\log\log n)$. Recently, Pettie and Tardos \cite{pettie2024extremal} found, for each positive integer $t$, an acyclic zero-one matrix $A_t$ such that $\ex(n,A_t)=\Omega(n(\log n)^t)$. Despite these advances, it is still very much unknown how large the extremal number of an acyclic zero-one matrix can be: there are no known acyclic zero-one matrices $A$ with $\ex(n,A)=\Omega(n^{1+\eps})$ for some $\eps>0$ but, strikingly, it is not even known whether there is an absolute constant $\eps>0$ such that $\ex(n,A)=O(n^{2-\eps})$ for all acyclic zero-one matrices $A$. 

Given our rather incomplete understanding of the theory even for acyclic zero-one matrices, it is unsurprising that very little is known about the general case, where $A$ is not necessarily acyclic.
To the best of our knowledge, the only known tight result for the extremal number of a zero-one matrix that is not acyclic concerns the $r\times t$ all-one matrix $A_{r,t}$ (naturally corresponding to the complete bipartite graph $K_{r,t}$) -- it is easy to show that $\ex(n,A_{r,t})=O(n^{2-1/t})$, and this is tight when $r$ is much larger than $t$ by equation (\ref{eqn:matrix and unordered}) and the known lower bounds for $\ex(n,K_{r,t})$. 

An important general result about extremal numbers of matrices that are not acyclic was proved by Methuku and Tomon \cite{methuku2022bipartite}. Say that a matrix $A$ is \textit{column-$t$-partite} (respectively, \emph{row-$t$-partite}) if it can be cut along its columns (respectively, rows) into $t$ submatrices such that every row (respectively, column) of each of these submatrices contains at most one $1$-entry.

\begin{theorem}[Methuku--Tomon \cite{methuku2022bipartite}]
    Let $t\geq 2$ and let $A$ be a column-$t$-partite zero-one matrix. Then $\ex(n,A)\leq n^{2-\frac{1}{t}+\frac{1}{t^2}+o(1)}$.
\end{theorem}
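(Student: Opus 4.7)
The plan is to adapt the dependent random choice (DRC) argument behind \Cref{thm:furediAKS}, applying it iteratively once per column-part of $A$. Write the column-$t$-partition of $A$ as $C_1 \sqcup \cdots \sqcup C_t$ (so each row of $A$ has at most one $1$ in each $C_i$), let $r$ denote the number of rows of $A$, and set $s = \max_i |C_i|$. Suppose $M$ is an $n \times n$ zero-one matrix with $m \geq n^{2 - 1/t + 1/t^2 + \eps}$ ones. First, by iteratively deleting rows and columns of below-average weight, pass to an order-preserving submatrix in which every row and column carries $\Omega(m/n)$ ones, losing only a constant factor in the total weight. Partition the columns of this cleaned submatrix into $t$ consecutive blocks $J_1, \ldots, J_t$ of size $\Theta(n/t)$; the goal is to embed $C_i$ into $J_i$ respecting the ambient order.

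The central subgoal is to produce a set of rows $R$ with $|R| \ge r$ such that every $r$-subset of $R$ has, simultaneously for each $i \in \{1, \ldots, t\}$, at least $s$ columns of $J_i$ where all $r$ rows carry a $1$. Given such an $R$, choose any $r$ rows $\rho_1,\ldots,\rho_r \in R$ and, in each $J_i$, take the first $|C_i|$ columns of the common $1$-neighborhood. Every entry demanded by the column-$t$-partite pattern of $A$ is then already a $1$, so zeroing out the surplus produces the required copy of $A$. The ordering is automatic, as we simply read off the chosen columns of each $J_i$ from left to right.

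To build $R$, I would run DRC $t$ times in sequence. Setting $R_0$ to be all rows of the cleaned submatrix, at step $i$ sample a uniformly random subset $T_i \subseteq J_i$ of size $u_i$ and define $R_i = \{\rho \in R_{i-1} : \rho \text{ has a } 1 \text{ in every column of } T_i\}$. Using that the $1$-density of each $J_i$ is $\Theta(t m / n^2) = \Theta(t \cdot n^{-1/t + 1/t^2 + \eps})$, one gets $\EE|R_i| \gtrsim |R_{i-1}| \cdot (tm/n^2)^{u_i}$. Simultaneously, the expected number of ``bad'' $r$-subsets of $R_i$ (those with fewer than $s$ common $1$-columns in $J_i$) is at most $\binom{|R_{i-1}|}{r}(s/|J_i|)^{u_i}$. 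Choose each $u_i$ large enough that this bad count is swamped by $\EE|R_i|$ at step $i$ (then delete one row from each surviving bad subset to enforce the property) and small enough that $|R_t| \ge r$ at the end.

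The main obstacle is balancing these parameters across $t$ iterations. A single DRC step in the spirit of Alon--Krivelevich--Sudakov would yield the exponent $2 - 1/t$ but ignores the column ordering; here we must pay for $t$ such steps, one per column-part, and each step shrinks the row set by an additional multiplicative factor. Carrying the accounting through, the cumulative loss in $|R_t|$ relative to $n$ is roughly $n^{(\sum_i u_i)(1/t - 1/t^2)}$, and requiring $|R_t| \ge r$ while keeping each $u_i$ large enough to kill bad $r$-subsets forces the $+1/t^2$ correction in the exponent of $m$. Matching the parameters precisely gives the claimed bound $\ex(n,A) \le n^{2 - 1/t + 1/t^2 + o(1)}$.
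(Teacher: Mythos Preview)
This theorem is quoted from Methuku and Tomon and is not proved in the present paper; however, the paper does sketch their argument (Section~\ref{sec:outline}). That argument is quite different from yours: Methuku and Tomon cut the \emph{rows} of $M$ into $k$ horizontal blocks and try to embed the rows of $A$ into distinct blocks; if this fails, they show one block has an increased ``density'' (measured by the number of $K_{t,t}$ copies), pass to that block, and iterate. The $1/t^2$ loss arises from this density-increment iteration. Your proposal instead cuts the \emph{columns} of $M$ into $t$ vertical blocks $J_1,\dots,J_t$ and runs dependent random choice once per block.

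There is a genuine gap in your plan. The estimate $\EE|R_i|\gtrsim |R_{i-1}|\cdot(tm/n^2)^{u_i}$ uses convexity together with the assertion that the average number of $1$s a row of $R_{i-1}$ has in $J_i$ is $\Theta(tm/n)$. But the cleaning step only guarantees this for the average over \emph{all} rows; once you have restricted to $R_{i-1}$ (rows with many $1$s in $J_1,\dots,J_{i-1}$), you have no control whatsoever over their density in $J_i$. Concretely, if $t=2$ and $M$ happens to be block-diagonal with respect to the partition $J_1\cup J_2$, then after the first DRC step every surviving row has zero $1$-entries in $J_2$, so $R_2=\emptyset$. The same obstruction defeats the simultaneous version (sample all $T_i$ at once): you would need a lower bound on $\sum_\rho\prod_i(d_\rho^{(i)})^{u_i}$ in terms of $\sum_\rho d_\rho$, and AM--GM goes the wrong way. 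Because of this decoupling between the $J_i$'s, the ordered problem genuinely cannot be reduced to $t$ independent unordered DRC steps; some mechanism is needed that ties together the row structure across the column blocks. Methuku and Tomon achieve this through the $K_{t,t}$-count density increment on horizontal blocks, and the present paper does it through the branching/shrinking-step argument; your plan, as stated, does not supply such a mechanism.
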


Since the $r\times t$ all-one matrix is column-$t$-partite, this is a fairly good estimate for large~$t$. Methuku and Tomon conjectured that their result can be strengthened in two ways: firstly, by improving the exponent to $2-1/t+o(1)$, and secondly, by significantly relaxing the column-$t$-partite assumption.

\begin{conjecture}[Methuku--Tomon \cite{methuku2022bipartite}] \label{conj:methukutomon}
    Let $t\geq 2$ and let $A$ be a zero-one matrix which contains at most~$t$ $1$-entries in each row. Then $\ex(n,A)\leq n^{2-\frac{1}{t}+o(1)}$.
\end{conjecture}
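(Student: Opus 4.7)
Write $r\times s$ for the dimensions of $A$, let $P_1,\dots,P_r\subseteq[s]$ be the $1$-patterns of the rows of $A$ (so $|P_k|\le t$ for every $k$), and fix $\eps>0$. The goal is to show that every $n\times n$ $0$-$1$ matrix $M$ with $m\geq n^{2-1/t+\eps}$ $1$-entries contains $A$, once $n$ is sufficiently large. The plan is to adapt the dependent random choice method behind \cref{thm:furediAKS} to handle the ordering on both the rows and the columns.

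The heart of the argument is a matrix analogue of the dependent-random-choice lemma: we aim to find a set of $s$ columns $c_1<\dots<c_s$ of $M$ such that, for every pattern $P_k$, the number of rows $i$ of $M$ with $M(i,c_p)=1$ for all $p\in P_k$ is at least $n^{1-o(1)}$. Once such a system of columns is available, we can embed $A$ row by row in increasing order of row index: at step $k$, the $n^{1-o(1)}$ rows of $M$ compatible with $P_k$ easily contain one larger than all previously chosen rows, so the greedy procedure succeeds. Because each row-choice ``wastes'' at most one row, the ordering constraint on rows is absorbed into the $o(1)$ slack.

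To construct the desired column system, I would first partition the columns of $M$ into $L=n^{o(1)}$ consecutive blocks and classify each row of $M$ by its block-level profile (the subset of blocks it hits, together with dyadic buckets for the number of $1$-entries in each block). Pigeonholing over this classification loses only an $n^{o(1)}$ factor in $m$, so we may restrict to a large submatrix of $M$ on which every row has essentially the same block-level profile; this effectively decouples the column order from the rest of the argument. On this cleaned-up submatrix I then run dependent random choice at the block level: sampling an ordered $t$-tuple of blocks, a Jensen-type estimate gives at least $n\cdot(m/n^2)^t=n^{\eps t}$ rows in the common neighborhood in expectation, and a standard averaging/second-moment argument upgrades this to a choice of $s$ blocks (refined to specific columns within each block) for which every subset of size at most $t$ has $n^{1-o(1)}$ compatible rows.

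The key difficulty, and the source of the improvement over the $n^{2-1/t+1/t^2+o(1)}$ bound of Methuku and Tomon, is to perform one integrated sampling whose Jensen exponent is exactly $t$, rather than (as in the column-$t$-partite approach) nesting $t$ separate samplings that each cost a $1/t$ slack. Doing this with an arbitrary column order, rather than under a column-$t$-partite hypothesis, is exactly why the block reduction is essential: it lets us treat the column order as coarsely categorical instead of as a fine linear order, at the cost of only an $n^{o(1)}$ factor. The other conceptual obstacle, that a single column set must serve all row patterns $P_1,\dots,P_r$ simultaneously, is handled by the uniform heavy-subset property of $U$; the row ordering in the embedding step is then a routine consequence of the large density of compatible rows. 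The main calculations to be done carefully are the Jensen step (to ensure the exponent is tight at $t$) and the pigeonhole over block profiles (to ensure the lost factor is $n^{o(1)}$, not $n^{\Omega(1)}$).
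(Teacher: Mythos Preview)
There is a genuine gap in the row-embedding step. You assert that once you have columns $c_1<\dots<c_s$ with $|S_k|\ge n^{1-o(1)}$ compatible rows for each pattern $P_k$, a greedy choice produces $i_1<\dots<i_r$ with $i_k\in S_k$, because ``each row-choice wastes at most one row''. That reasoning is valid in the \emph{unordered} setting, where one only needs the $i_k$ to be distinct; it is false in the ordered setting. Nothing you have arranged prevents, say, every row in $S_2$ from lying strictly below every row in $S_1$: then $|S_1|,|S_2|$ can both be of order $n$ and yet no pair $i_1<i_2$ with $i_1\in S_1$, $i_2\in S_2$ exists. Dependent random choice controls the \emph{sizes} of common neighbourhoods, not their \emph{positions}, so the row ordering is not absorbed into any $o(1)$ slack.

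This is precisely the obstacle the paper's proof is built around. Rather than partitioning columns, the paper nests the \emph{rows} into a depth-$s$ hierarchy of blocks (with $s$ a large constant and $k=n^{1/s}$ blocks per level), picks a random row $r$, and tracks, for each $t$-set $e$ of columns in $N(r)$, at which levels the common neighbourhood of $e$ spills outside the block containing $r$ (``shrinking steps''), and whether the spill is above or below. A tree-counting lemma shows that almost every $e$ has at least $2a$ shrinking steps, hence at least $a$ of the same direction; a hypergraph Ramsey argument then yields $b$ columns all of whose $t$-subsets share the same $a$ shrinking levels and the same direction. The nesting of the blocks, together with the common above/below direction, is exactly what forces the embedded rows $i_1,\dots,i_a$ to appear in the correct order. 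Your column-block pigeonhole may help with the column order, but it supplies no mechanism for the row order; that is the missing idea.
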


This conjecture is motivated by the aforementioned result of F\"uredi, and Alon, Krivelevich and Sudakov (Theorem \ref{thm:furediAKS}), and indeed (in view of equation (\ref{eqn:matrix and unordered})) it would be a direct generalization of that result, since zero-one matrices with at most $t$ $1$-entries in each row correspond to bipartite graphs with maximum degree at most $t$ in one side of the bipartition. As a partial result towards Conjecture \ref{conj:methukutomon}, Methuku and Tomon proved that $\ex(n,A)\leq n^{2-\frac{1}{t}+o(1)}$ when $A$ is both column-$t$-partite and row-$t$-partite. \medskip

In this paper we completely resolve Conjecture \ref{conj:methukutomon} as follows. 

\begin{theorem}\label{thm:generalwitho1}
    Let $t\geq 1$ and let $A$ be a zero-one matrix which contains at most $t$ $1$-entries in each row. Then $\ex(n,A)\leq n^{2-\frac{1}{t}+o(1)}$.
\end{theorem}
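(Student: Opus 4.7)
The plan is to reduce the ordered matrix problem to an essentially unordered bipartite-graph embedding problem, on which a dependent-random-choice argument in the spirit of Alon--Krivelevich--Sudakov \cite{alon2003turan} delivers the optimal exponent $2 - 1/t$. A hierarchy of dyadic classifications of rows of $M$ will be used to guarantee that the resulting embedding automatically respects both the row and column orderings of $A$.

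Assume for contradiction that $M$ is an $n \times n$, $A$-free zero-one matrix with $w(M) \geq n^{2 - 1/t + \eps}$ for some $\eps > 0$. First, partition the columns of $M$ into $O(\log n)$ dyadic intervals. For each row $r$ of $M$, I would record its \emph{type}, the sequence (in order) of column-intervals in which the $1$-entries of $r$ lie, truncated to a length bounded in terms of $A$ so that there are only $n^{o(1)}$ types. By pigeonhole, some single type $\tau$ supports rows of total weight at least $n^{2 - 1/t + \eps - o(1)}$. Next, refine these rows into \emph{subtypes} by subdividing each of $\tau$'s column-intervals dyadically and recording the finer pattern; again, restricting to a single subtype $\sigma$ costs only an $n^{o(1)}$ factor. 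The upshot is a large submatrix $M'$ whose rows all have $1$-entries falling in a rigid, prescribed pattern of fine column-intervals.

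On $M'$, I would apply dependent random choice: sample a random set $S$ of columns of $M'$, look at the rows of $M'$ whose $1$-set contains $S$, and choose $|S|$ so that this common row-neighborhood is large enough to embed the rows of $A$ greedily, using the fact that each row of $A$ has at most $t$ ones (this is exactly the setting where the Alon--Krivelevich--Sudakov bound $n^{2-1/t}$ is sharp). Crucially, because every row of $M'$ shares subtype $\sigma$, any row-by-row greedy embedding automatically distributes the $1$-entries of each embedded row into column-intervals whose relative order matches the column order of $A$. So the copy of $H_A$ produced by DRC, ordered by the natural row-order of $M'$, is in fact an ordered copy of $A$.

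The main obstacle is calibrating the type/subtype hierarchy: it must be fine enough that the structural uniformity forces the embedding to respect both orderings, but coarse enough that the total $n^{o(1)}$ pigeonhole loss does not consume the $n^\eps$ weight advantage. Obtaining the sharp exponent $2 - 1/t + o(1)$, rather than the weaker $2 - 1/t + 1/t^2 + o(1)$ of Methuku--Tomon \cite{methuku2022bipartite}, will likely require tracking more refined information than the type alone (such as local interval densities together with a regularization step that makes the relevant bipartite degrees balanced before DRC is invoked). I expect the key technical lemma to be a quantitative statement of the form: any dense submatrix with a fixed subtype either already contains $A$ or admits a strictly denser, simpler substructure, so that iterating terminates in $n^{o(1)}$ steps.
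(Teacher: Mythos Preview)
Your proposal has two concrete gaps that would need to be closed before it could yield the theorem.

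\textbf{The row-type classification is ill-defined.} You propose to record, for each row $r$ of $M$, the sequence of dyadic column-intervals containing its $1$-entries, ``truncated to a length bounded in terms of $A$''. But the rows of $M$ typically have $\Theta(n^{1-1/t+\eps})$ $1$-entries each --- the bound of $t$ ones per row is a hypothesis on $A$, not on $M$. There is no meaningful truncation that keeps the number of types at $n^{o(1)}$ while retaining enough information to control where an embedded row's $1$s land. The Methuku--Tomon argument you cite partitions the columns of $A$ (exploiting the column-$t$-partite hypothesis, which you do not have), not the rows of $M$.

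\textbf{The row ordering is not handled.} Even granting a structured submatrix $M'$ on which dependent random choice produces a set $C$ of columns such that every $t$-subset of $C$ has many common row-neighbours, you still need the $a$ rows of $A$ to land in rows of $M'$ \emph{in the correct order}. Your sketch asserts this comes ``ordered by the natural row-order of $M'$'', but the greedy embedding gives no such control: for each row of $A$ you must pick a row of $M'$ hitting a prescribed $t$-subset of $C$, and nothing forces these choices to be monotone. This is precisely the obstacle that separates the ordered problem from F\"uredi/Alon--Krivelevich--Sudakov. The paper's solution is quite different from anything in your outline: it fixes a single random row $r$, builds a nested sequence of horizontal blocks around $r$, and shows (via a branching-tree lemma) that for most $t$-sets of columns the common row-neighbourhood ``shrinks'' in many of these nesting steps. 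A Ramsey argument then finds $b$ columns sharing the same set of $a$ shrinking steps, and the $a$ rows of $A$ are embedded into $a$ disjoint, naturally ordered annular blocks --- so the row order is enforced structurally, not by DRC. Your final paragraph proposes a density-increment iteration instead; the paper explicitly abandons that route because the increment is too weak without the column-$t$-partite assumption.
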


As we have discussed before, this is tight up to the $o(1)$ term. It is known~\cite{furedi1992matrices} that the $o(1)$ term is necessary in the case $t=1$; indeed, certain zero-one matrices $A$ with one $1$-entry in each row correspond to Davenport--Schinzel sequences \cite{davenport1965combinatorial} satisfying $\ex(n,A)=\omega(n)$. This suggests that perhaps the $o(1)$ term is necessary in general. On the other hand, we can show that for column-$t$-partite matrices, this error term is not needed when $t\geq 2$.

\begin{theorem}\label{thm:columntpartite}
    Let $t\geq 2$ and let $A$ be a column-$t$-partite zero-one matrix. Then $\ex(n,A)=O(n^{2-\frac{1}{t}})$.
\end{theorem}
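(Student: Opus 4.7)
The plan is to adapt the dependent random choice framework of Alon, Krivelevich and Sudakov \cite{alon2003turan} to the ordered matrix setting, exploiting the column block structure of $A$. Let $A$ be column-$t$-partite with column blocks $B_1 < B_2 < \ldots < B_t$ of sizes $b_1, \ldots, b_t$ and $s$ rows, and write $m = b_1 + \ldots + b_t$. For each row $\ell \in [s]$ and block $i \in [t]$, let $j_i(\ell) \in B_i$ denote the (unique, if any) position of the $1$-entry of row $\ell$ in block $B_i$; set $S_\ell = \{i : j_i(\ell) \text{ is defined}\}$ and $q_\ell = |S_\ell| \leq t$. Let $M$ be an $n \times n$ zero-one matrix with $w(M) \geq C_0\, n^{2-1/t}$, where $C_0 = C_0(A)$ is a sufficiently large constant.

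I would partition the columns of $M$ into $m$ consecutive sub-intervals $D_{i, j}$ ($i \in [t]$, $j \in [b_i]$) with sizes chosen so that each sub-interval carries weight $\Theta(w(M)/m)$, and set $D_i = \bigcup_j D_{i, j}$ (this serves as $M$'s $i$-th column block). Sample, independently and uniformly at random, a column $c_{i, j} \in D_{i, j}$ for each $(i, j)$; the resulting tuple $(c_{i, j})$ is then automatically in the column order of $A$. Call a row $r$ of $M$ \emph{compatible with row $\ell$ of $A$} if $M(r, c_{i, j_i(\ell)}) = 1$ for every $i \in S_\ell$. The task reduces to showing that, with positive probability, there exist rows $r_1 < \ldots < r_s$ with $r_\ell$ compatible with row $\ell$ of $A$; such a tuple, together with the sampled columns, gives the desired embedding of $A$ into $M$.

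The central computation is a lower bound on the expected number of rows compatible with a fixed row $\ell$, which equals
\[
\sum_r \prod_{i \in S_\ell} \frac{d_{i, j_i(\ell)}(r)}{|D_{i, j_i(\ell)}|}, \qquad d_{i, j}(r) := |N(r) \cap D_{i, j}|.
\]
A single application of Hölder's inequality across the $q_\ell$ factors, combined with the power-mean bound $\sum_r d_{i, j}(r)^t \geq (\sum_r d_{i, j}(r))^t / n^{t-1}$ and the hypothesis $w(M) \geq C_0\, n^{2-1/t}$, gives a lower bound of order $C_0^{q_\ell} n^{1 - q_\ell/t} \geq C_0^{q_\ell}$. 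Taking $C_0$ large makes the expected compatible count arbitrarily large for each $\ell$, and a standard second-moment / greedy selection argument then extracts the order-preserving $s$-tuple $r_1 < \ldots < r_s$.

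The main obstacle is that the product $\prod_{i \in S_\ell} d_{i, j_i(\ell)}(r)$ vanishes for rows whose $1$-entries are very unevenly distributed across the sub-intervals, so the Hölder estimate only bites after a balancing step. In the general setting of Theorem \ref{thm:generalwitho1}, one handles this by a dyadic cleaning that incurs precisely the $n^{o(1)}$ factor we are now trying to avoid. The column-$t$-partite hypothesis saves us: since each row of $A$ imposes at most one constraint per column block $D_i$, the relevant product $\prod_{i \in S_\ell}$ has at most $t$ factors---paralleling the $t$-th moment computation underlying Theorem \ref{thm:furediAKS}---and the required balance can be achieved by a one-shot choice of sub-interval boundaries tuned to the weight distribution of $M$, costing only a constant factor and preserving the sharp exponent $2 - 1/t$.
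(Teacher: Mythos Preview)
Your approach is genuinely different from the paper's (which uses the nested-block ``shrinking step'' machinery with $k=2$ together with an ordered Erd\H os box theorem), but as written it has two real gaps.

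\textbf{The H\"older step goes the wrong way.} You need a \emph{lower} bound on
\[
\sum_r \prod_{i\in S_\ell} d_{i,j_i(\ell)}(r),
\]
but H\"older's inequality only gives the upper bound $\sum_r \prod_i x_i(r)\le \prod_i\bigl(\sum_r x_i(r)^{q_\ell}\bigr)^{1/q_\ell}$. There is no corresponding lower bound in terms of the individual sums $\sum_r d_{i,j}(r)$: if the supports of the $d_{i,j}(\cdot)$ are disjoint, the product vanishes identically while each sum can be as large as you like. You acknowledge this obstacle, but the proposed fix --- choosing sub-interval boundaries tuned to the global weight distribution --- cannot repair it, because the imbalance is a row-by-row phenomenon while the boundaries are global. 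In the classical dependent random choice proof of Theorem~\ref{thm:furediAKS} this issue does not arise because all $t$ sampled vertices come from the \emph{same} set, so the relevant quantity is $\sum_r d(r)^t$ and convexity applies directly; once you force the sampled columns into prescribed disjoint intervals $D_{i,j}$, that convexity argument is lost.

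\textbf{The row ordering is not handled.} Even if each $R_\ell$ (the set of rows compatible with row $\ell$ of $A$) were large, you still need $r_1<\cdots<r_s$ with $r_\ell\in R_\ell$, and neither a greedy pick nor a second-moment count produces this. The sets $R_\ell$ depend on different (though overlapping) subsets of the sampled columns and can be arranged adversarially: already for $s=2$ one can have $\max R_2<\min R_1$. This ordering constraint is exactly what separates the matrix problem from the unordered Theorem~\ref{thm:furediAKS}. The paper's proof confronts it head-on: the random row $r$ induces a nested sequence of row-intervals $R_0\supseteq R_1\supseteq\cdots$, and the ``above/below-shrinking'' dichotomy guarantees, for the chosen colour class, a common neighbour of each relevant $t$-set of columns lying in the \emph{correct} gap $R_{j_u-1}\setminus R_{j_u}$ on the correct side of $R_{j_u}$; the nesting then forces $i_1<\cdots<i_a$ automatically. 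Your sketch has no analogous mechanism.
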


Every matrix with at most one $1$-entry in each row is column-$1$-partite, so by our  discussion above, there are column-$1$-partite matrices with superlinear extremal numbers. Hence, Theorem~\ref{thm:columntpartite} reveals an intriguing difference between the cases $t=1$ and $t>1$. 

An interesting feature of our proof method is that, unlike the proof of Methuku and Tomon and many other important recent advances in the field \cite{korandi2019turan,kucheriya2023characterization}, it does not use a density increment argument. Instead, our proof employs a novel use of `blocks' of different sizes to construct an embedding of our forbidden matrix~$A$ (see Section~\ref{sec:outline} for an overview of our proof). We believe that this approach may have further applications in the extremal theory of zero-one matrices and ordered graphs.

\subsection{Ordered graphs}

In 2006, Pach and Tardos \cite{pach2006forbidden} initiated the systematic study of the extremal numbers of ordered graphs. An \textit{ordered graph} is a pair $(G,<)$ for which $G$ is a graph and $<$ is a total ordering of the vertex set of $G$ -- in what follows, we will sometimes abuse notation slightly and simply write $G$ for $(G,<)$. We say that $(G,<)$ contains $(H,<')$ as an \textit{ordered subgraph} if there exists an order-preserving embedding of $H$ into $G$. Analogously to the unordered setting, the extremal number of an ordered graph $H$, denoted by $\ex_<(n,H)$, is the maximum possible number of edges in an ordered graph on $n$ vertices that does not contain $H$ as an ordered subgraph. The \textit{interval chromatic number} of an ordered graph $H$, denoted by $\chi_<(H)$, is the minimum number of colours needed to colour the vertices of $H$ such that there are no edges within the colour classes, and each colour class is an interval with respect to the ordering on $V(H)$. Pach and Tardos \cite{pach2006forbidden} established an analogue of the Erd\H os--Stone--Simonovits theorem by proving that $\ex_<(n,H) = \left(1-\frac{1}{\chi_<(H)-1}+o(1) \right)\binom{n}{2}$ holds for any ordered graph $H$. This means that, much like in the unordered case, the asymptotic value of $\ex_<(n,H)$ is known unless $\chi_<(H)=2$. A graph $H$ is called \emph{ordered bipartite}\footnote{Note that with this definition an ordered bipartite graph is not quite the same as a bipartite graph with an arbitrary ordering of the vertices -- here, one part of the bipartition must completely precede the other in the ordering.} if $\chi_<(H)=2$.

There is a natural connection between ordered bipartite graphs and zero-one matrices: for an ordered bipartite graph $H$ with vertex classes $X,Y$ such that $\max(X)<\min(Y)$, we can define the matrix $A_H$ whose rows correspond to elements of $X$ (ordered according to the ordering of $V(H)$), whose columns correspond to elements of $Y$, and in which $A_H(x,y)=1$ if and only if $xy\in E(H)$. It is easy to see then that an ordered bipartite graph $G$ contains another ordered bipartite graph $H$ as an ordered subgraph if and only if $A_G$ contains $A_H$. Pach and Tardos established the following connection (which is much closer than the one established in (\ref{eqn:matrix and unordered}) between zero-one matrices and \emph{unordered graphs}) between the extremal numbers of $H$ and $A_H$.

\begin{theorem}[Pach--Tardos \cite{pach2006forbidden}] \label{thm:matrixandgraph}
    For any ordered bipartite graph $H$, we have
    \[
\ex(n,A_H)\leq \ex_<(2n,H)=O(\ex(2n,A_H)\log 2n),
\]
and if $\ex(n,A_H)=O(n^{c})$ for some $c>1$, then $\ex_<(n,H)=O(n^c)$.
\end{theorem}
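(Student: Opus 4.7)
The plan is to handle the three parts of the statement by separating the easy embedding direction from the harder combinatorial one. For the lower bound $\ex(n,A_H)\leq \ex_<(2n,H)$, I would use a direct embedding: given an $n\times n$ zero-one matrix $M$ that avoids $A_H$, form an ordered bipartite graph on $2n$ vertices by placing $n$ ``row vertices'' $r_1<\dots<r_n$ first, followed by $n$ ``column vertices'' $c_1<\dots<c_n$, and adding the edge $r_ic_j$ whenever $M(i,j)=1$. The associated matrix of this ordered graph is $M$ itself, so the graph is $H$-free and has exactly $w(M)$ edges; hence $w(M)\leq\ex_<(2n,H)$.

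For the upper bound, the central tool is a dyadic decomposition of the vertex set. Let $G$ be an $H$-free ordered graph on $N$ vertices, where $N$ is (without loss of generality) a power of $2$; I will take $N=2n$ at the end. For each edge $uv$ with $u<v$ there is a unique smallest dyadic interval containing both endpoints; in it, $u$ lies in the left half $I$ and $v$ lies in the right half $J$, both of length $2^{\ell-1}$ for some level $\ell$. Since $G$ is $H$-free, the bipartite graph consisting of the edges of $G$ between $I$ and $J$, with the inherited order, corresponds to a $2^{\ell-1}\times 2^{\ell-1}$ zero-one matrix avoiding $A_H$, and hence has at most $\ex(2^{\ell-1},A_H)$ edges. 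There are $N/2^\ell$ such pairs of halves at level $\ell$ and $\log_2 N$ levels in total, so
\[
e(G)\;\leq\;\sum_{\ell=1}^{\log_2 N}\frac{N}{2^\ell}\,\ex(2^{\ell-1},A_H).
\]

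To deduce both claimed upper bounds from this inequality, I would use a near-monotonicity of $\ex(m,A_H)/m$: by placing disjoint copies of a densest $A_H$-free $m\times m$ matrix along the diagonal of an $N\times N$ grid, one obtains $\ex(m,A_H)/m=O(\ex(N,A_H)/N)$. This turns each summand above into $O(\ex(N,A_H))$ and yields $e(G)=O(\ex(N,A_H)\log N)$, which is the middle inequality after setting $N=2n$. For the final claim, if $\ex(m,A_H)\leq Cm^c$ with $c>1$ then the sum is a geometric series in $\ell$ with ratio $2^{c-1}>1$, hence dominated by its last term of order $N^c$, giving $\ex_<(N,H)=O(N^c)$. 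The only step that requires any real care is verifying that the dyadic pair $(I,J)$ assigned to each edge is indeed unique (so edges are not double-counted) and that the ordered bipartite subgraph between $I$ and $J$ genuinely corresponds to a submatrix of the ambient matrix, so that any copy of $A_H$ in it would lift to an ordered copy of $H$ in $G$. Everything else is routine bookkeeping and I do not expect a serious obstacle.
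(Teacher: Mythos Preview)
The paper does not prove this theorem at all; it is quoted from Pach and Tardos~\cite{pach2006forbidden} as background, so there is no in-paper argument to compare against. Your approach---turning an $A_H$-free matrix into an ordered bipartite graph for the lower bound, and using a dyadic interval decomposition of $[N]$ for the upper bound---is exactly the standard route, and your handling of the geometric-series case $\ex(m,A_H)=O(m^c)$ with $c>1$ is clean and correct.

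There is, however, one genuine soft spot in your near-monotonicity step. You claim that placing $\lfloor N/m\rfloor$ copies of a densest $A_H$-free $m\times m$ matrix along the diagonal of an $N\times N$ grid yields an $A_H$-free matrix, whence $\ex(m,A_H)/m=O(\ex(N,A_H)/N)$. This is fine when $H$ is connected (a copy of $A_H$ would then be trapped in a single diagonal block), but it fails when $H$ is disconnected. For instance, if $A_H=\begin{psmallmatrix}1&0\\0&1\end{psmallmatrix}$, then each diagonal block supplies a $1$-entry and any two blocks together already contain $A_H$. The near-monotonicity $\ex(m,A)/m=O(\ex(n,A)/n)$ is nonetheless true for every pattern $A$, but it needs a different justification than diagonal blocks; you should either cite it as a known fact (it appears in the F\"uredi--Hajnal/Pach--Tardos line of work) or supply the correct construction. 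Note that this issue only affects the middle estimate $\ex_<(2n,H)=O(\ex(2n,A_H)\log 2n)$; your proof of the final $O(n^c)$ assertion does not rely on near-monotonicity and stands as written.
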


Using this result, our Theorem \ref{thm:generalwitho1} immediately implies the following general bound on the extremal numbers of ordered bipartite graphs.

\begin{theorem}
    Let $H$ be an ordered bipartite graph with maximum degree at most $t$ in one side of the bipartition. Then $\ex_<(n,H)\leq n^{2-1/t+o(1)}$.
\end{theorem}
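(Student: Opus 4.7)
The plan is to deduce this theorem directly from Theorem~\ref{thm:generalwitho1} via the matrix--graph correspondence of Pach and Tardos recorded in Theorem~\ref{thm:matrixandgraph}. Since $H$ is ordered bipartite, its vertex set splits into two intervals $X$ and $Y$ with $\max(X)<\min(Y)$, all edges going between them. The hypothesis asserts that every vertex of one of these two sides has degree at most~$t$. By reversing the ordering of $V(H)$ if necessary, I may assume that this side is $X$; the point is that reversing the ordering corresponds to transposing the associated matrix, which leaves its extremal number invariant, so the eventual bound we obtain is unaffected.

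Next, I form the matrix $A_H$ defined in the passage preceding Theorem~\ref{thm:matrixandgraph}: its rows are indexed by the vertices of $X$ in the inherited order, its columns are indexed by the vertices of $Y$, and $A_H(x,y)=1$ iff $xy\in E(H)$. The degree condition on $X$ translates word for word into the statement that every row of $A_H$ contains at most $t$ $1$-entries. Theorem~\ref{thm:generalwitho1} then applies and yields
\[
\ex(n,A_H)\;\le\; n^{2-1/t+o(1)}.
\]

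To transfer this back to the ordered graph setting, I invoke the first inequality in Theorem~\ref{thm:matrixandgraph}, namely $\ex_<(2n,H)=O(\ex(2n,A_H)\log 2n)$. Substituting the bound above and absorbing the single logarithmic factor into the $n^{o(1)}$ error term immediately gives $\ex_<(n,H)\le n^{2-1/t+o(1)}$, as required. There is no genuine obstacle to overcome here: the result is essentially an immediate corollary of Theorem~\ref{thm:generalwitho1} combined with the Pach--Tardos correspondence, and the only piece of bookkeeping is the symmetry reduction (via order-reversal/transposition) that arranges for the low-degree side to correspond to the rows of $A_H$ rather than the columns.
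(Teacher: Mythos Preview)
Your proposal is correct and matches the paper's approach: the paper states this theorem as an immediate consequence of Theorem~\ref{thm:generalwitho1} via the Pach--Tardos correspondence (Theorem~\ref{thm:matrixandgraph}), and you have simply spelled out the details of that deduction. The one minor imprecision is the remark that reversing the vertex order ``corresponds to transposing the associated matrix'' (strictly it is the transpose with both row and column orders reversed), but this is harmless since all of these operations preserve the matrix extremal number, and in any case the reduction is already justified at the ordered-graph level by the obvious identity $\ex_<(n,H)=\ex_<(n,H')$ for $H'$ the order-reversal of $H$.
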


As a corollary, we obtain that if $H$ is an ordered even cycle (with interval chromatic number~$2$), then $\ex_<(n,H)\leq n^{3/2+o(1)}$. For some known results about the extremal number of certain \emph{families} of ordered even cycles, see \cite{gyHori2018turan}.

\medskip

\textbf{Organization of the paper.} In the next subsection, we give a detailed overview of the proof of Theorem~\ref{thm:generalwitho1}. The proof of Theorem \ref{thm:columntpartite} follows a similar strategy. In Section \ref{sec:proofs}, we prove both Theorem \ref{thm:generalwitho1} and Theorem \ref{thm:columntpartite}.

\subsection{Proof outline}
\label{sec:outline}

We now discuss some of the ideas used in our proof. For comparison, let us first recall the proof strategy of Methuku and Tomon~\cite{methuku2022bipartite} for their (weaker) bound of $n^{2-1/t+1/t^2+o(1)}$ in the special case of column-$t$-partite matrices. If $A$ is a fixed column-$t$-partite matrix, and $M$ is an $m\times n$ matrix which does not contain $A$, they divide $M$ into $k$ (horizontal) `blocks', i.e., submatrices of size $(m/k)\times n$, for some $k$. They show that if we cannot find a copy of $A$ where each row is coming from a different block, then one of the blocks must have large `density' in some sense -- more precisely, the number of copies of $K_{t,t}$ must be large in one of the blocks. They pass to that block (i.e., delete the rows corresponding to the other blocks) and repeat this process, obtaining a density increment at each step, eventually leading to a contradiction.

One can try to follow this strategy for general matrices $A$ which have at most $t$ $1$-entries in each row. However, without the column-$t$-partite condition, the density increment is too weak to obtain useful bounds. Hence, in this paper we will use a different argument. We will also repeatedly divide our large matrix $M$ into $k$ blocks. However, a key difference is that our argument is more `global' in the sense that we will not ignore the `deleted' rows in the blocks that we do not pass to -- in fact, they will be crucial to building our copy of $A$. Another key difference is coming from the way we choose the block we pass to: instead of passing to the block which is the `densest', we will select the block in a certain randomised way which works well together with classical dependent random choice-type arguments. Upon completion of this procedure we obtain a sequence of blocks, from which we build $A$.\medskip

To give more details about our proof, let us focus on the case $t=2$ (i.e., each row of $A$ has at most two $1$-entries), and assume that we are trying to prove a bound of $O(n^{1.51})$ for a matrix $A$ which has $20$ rows and $20$ columns. Let $n$ be large, and let $M$ be an $n\times n$ zero-one matrix of weight $n^{1.51}$. We perform the following procedure. First, we pick a row $r$ of $M$ uniformly at random, and look at the set of columns $C$ which have a $1$-entry in this row. (The row $r$ is fixed for the remainder of this procedure.) Let $s$ be a large but bounded number; for instance, $s=10^5$ works for the parameters above. This is the number of steps we will have in the process. Starting with our matrix $M_0=M$, we obtain $M_1$, $M_2$, \dots, $M_s$ as follows. Having defined $M_i$, we divide it into $k=n^{1/s}$ (horizontal) blocks of equal size, and $M_{i+1}$ is defined to be the block which contains the row $r$. Note that, after the last step, our matrix $M_s$ is the single row $r$.

For each pair of columns $c_1,c_2$ in $C$, we consider how the size of their common neighbourhood (i.e., the number of rows having a $1$-entry in both $c_1$ and $c_2$) changes during this process, i.e., we consider the numbers $N_j(c_1c_2)=|\{i:M_j(i,c_1)=M_j(i,c_2)=1\}|$ for $j=0,1,\dots,s$. Note that, for almost all pairs $\{c_1,c_2\}\in\binom{C}{2}$, $N_0(c_1c_2)$ is expected to be large (at least $n^{0.01})$, by our definition of $C$ using dependent random choice. However, $N_s(c_1c_2)$ is $1$ for all $\{c_1,c_2\}\in\binom{C}{2}$. We consider two types of steps for the columns $c_1,c_2$: `shrinking' steps, i.e., steps $j$ which have $N_{j}(c_1c_2)<N_{j-1}(c_1c_2)$, and `non-shrinking' steps, i.e., steps $j$ which have $N_{j}(c_1c_2)=N_{j-1}(c_1c_2)$. Note that we have $N_{j}(c_1c_2)<N_{j-1}(c_1c_2)$ if and only if more than one of the $k$ blocks of $M_{j-1}$ contain a row which has a $1$-entry in both of the columns $c_1$ and $c_2$. We consider two subcases for shrinking steps: a shrinking step for $c_1c_2$ is `above-shrinking' if there is a row in $M_{j-1}$ which has a $1$-entry in both of the columns $c_1$ and $c_2$, and this row is `above' the block containing $r$ (i.e., `above' the submatrix $M_{j})$, and `below-shrinking' otherwise (in which case there is a row in $M_{j-1}$ which has a $1$-entry in both of the columns $c_1$ and $c_2$, and this row is `below' the block containing $r$). 

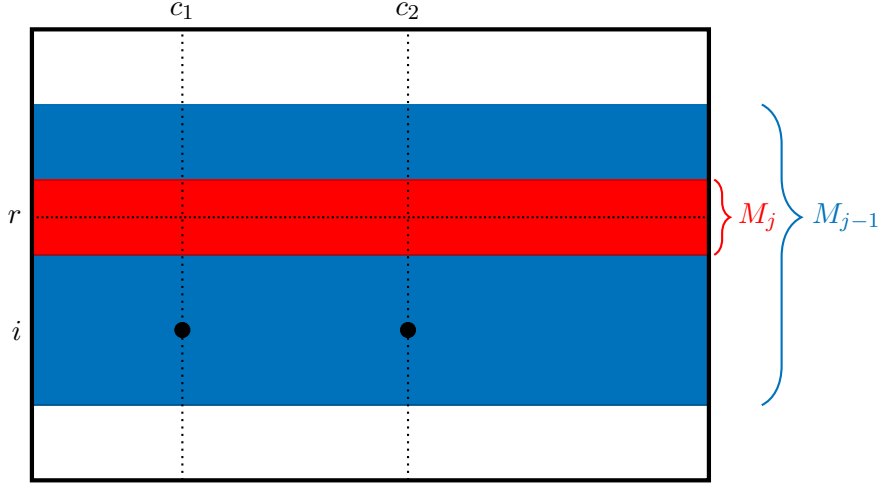
\begin{figure}[hbt!]
    \centering
    \begin{tikzpicture}

        \filldraw[fill=RoyalBlue, opacity=0.25] (1,-1) rectangle (10,-5);
        \draw[RoyalBlue] (1,-1) rectangle (10,-5);
        \filldraw[fill=red, opacity=0.4] (1,-2) rectangle (10,-3);
        \draw[red] (1,-2) rectangle (10,-3);
        
        \draw[dotted, thick] (3,-6)--(3,0);
        \draw[dotted, thick] (6,-6)--(6,0);
        %\draw[dotted, thick] (8,8)--(8,0);

        \draw[densely dotted, thick] (1,-2.5) -- (10,-2.5);

        \filldraw[black] (3,-4) circle (0.1cm);
        \filldraw[black] (6,-4) circle (0.1cm);

        \node[anchor=east] (R) at (1,-2.5) {$r$};
        \node[anchor=east] (I) at (1,-4) {$i$};

        \node[anchor=south] at (3,0) {$c_1$};
        \node[anchor=south] at (6,0) {$c_2$};
        %\node[anchor=south] at (8,8) {$c_3$};

        \draw[ultra thick] (1,0) rectangle (10,-6);

        \draw [thick, RoyalBlue,
	decorate, 
	decoration = {brace,
		raise=20pt,
		amplitude=15pt,
		aspect=0.375}] (10,-1) --  (10,-5)
node[pos=0.375,right=35pt,RoyalBlue]{$M_{j-1}$};

\draw [thick, red,
	decorate, 
	decoration = {brace,
		raise=2pt,
		amplitude=6pt,
		aspect=0.5}] (10,-2) --  (10,-3)
node[pos=0.5,right=7pt,red]{$M_j$};
    \end{tikzpicture}
    \caption{Step $j$ is `below-shrinking' for $c_1c_2$ because we have a row $i$ `below' $M_j$ that contains a $1$-entry in both of the columns $c_1$ and $c_2$.}
    \label{subtypefigure}
\end{figure}

To see why shrinking steps are useful, let us assume that we can find a set $C'=\{y_1,\dots,y_{20}\}$ of $20$ columns ($y_1<\dots<y_{20}$), together with $20$ steps $j_1,\dots,j_{20}\in[s]$ (where $j_1 < \ldots < j_{20}$), such that each of the steps $j_i$ is below-shrinking for all pairs of columns from $C'$.  For each $\ell\in[20]$, let $B_\ell$ be the submatrix of $M_{j_\ell-1}$ obtained by taking all rows below $M_{j_\ell}$. Note that the submatrices $B_\ell$ of $M$ are pairwise disjoint, and $B_{\ell}$ is located completely below $B_{\ell'}$ if $\ell<\ell'$. Moreover, for each $\ell\in[20]$, and each pair $c_1c_2$ from $C'$, since the step $j_\ell$ is below-shrinking for $c_1c_2$, we know that there is a row of $B_\ell$ which has a $1$-entry in both $c_1$ and $c_2$. However, these conditions easily imply that we can find a copy of $A$ in $M$ by using the columns $C'=\{y_1,\dots,y_{20}\}$, and embedding the $\ell$-th row of $A$ into $B_{21-\ell}$ appropriately (more precisely, if $A(\ell,a)=A(\ell,b)=1$, then the $\ell$-th row of the embedded copy of $A$ is an arbitrary row of $B_{21-\ell}$ which contains a $1$-entry in columns $y_a$ and $y_b$). (See Figure~\ref{fig:matrix} for an example of how shrinking steps can be used to construct an embedding of $A$ in $M$.)

Thus, it is enough to find a set $C'\subseteq C$ of $20$ columns, and a set of $20$ steps from $[s]$, such that each step is below-shrinking for each column-pair from $C'$. Similarly, it is enough to find a set $C'\subseteq C$ of $20$ columns, and a set of $20$ steps from $[s]$, such that each step is above-shrinking for each column-pair from $C'$. However, as noted above, if we look at a pair of columns, we `almost always' expect to have $N_0(c_1c_2)>n^{0.01}$, and we also know that $N_s(c_1c_2)=1$. Since we always divide into $k$ blocks and pass to one of the blocks in each shrinking step, $N_j(c_1c_2)$ is expected to shrink by at most a factor of $1/k=1/n^{1/s}$.  This can be made more precise using that, crucially, we always pass to the  block containing $r$, and $r$ is a uniformly random row in the common neighbourhood of $c_1$ and $c_2$. Thus, typically, we expect to have more than $0.01s$ shrinking steps -- in particular, for almost all pairs of columns, we expect more than $40$ shrinking steps. Hence, for almost all pairs of columns, we can find $20$ shrinking steps of the same subtype (above-shrinking or below-shrinking). Because this holds for almost all pairs of columns, we can find a large subset $C_1\subseteq C$ of columns such that each pair from $C_1$ has either $20$ above-shrinking steps or $20$ below-shrinking steps (by Turán's theorem). But then, since the number of `shrinking patterns' is bounded (as there are only $s=10^5$ steps, and we need to select $20$ of them to be above-shrinking or to be  below-shrinking), if $|C_1|$ is large enough, then by the multicolour Ramsey's theorem, we can find a large subset $C_2$  (of size more than $20$) of columns in $C_1$ such that every pair of columns in $C_2$ has the same shrinking pattern (i.e., there exist $20$ steps that are either all above-shrinking or all below-shrinking for every pair of columns in $C_2$), which finishes the proof by the observations above.

\begin{figure}[ht]
    \centering
    \begin{tikzpicture}

        \filldraw[fill=RoyalBlue, opacity=0.25] (0,-1) rectangle (10,-9.5);
        \draw[color=RoyalBlue] (0,-1) rectangle (10,-9.5);
        
        \filldraw[fill=red, opacity=0.4] (0,-2) rectangle (10,-7);
        \draw[color=red] (0,-2) rectangle (10,-7);
        
        \filldraw[fill=blue, opacity=0.3] (0,-2.5) rectangle (10,-4.5);
        \draw[color=blue] (0,-2.5) rectangle (10,-4.5);

        \draw[ultra thick] (0,0) rectangle (10,-11);

        \draw[densely dotted, thick] (0,-3)--(10,-3);

        \draw[dotted, thick] (3,-11)--(3,0);
        \draw[dotted, thick] (6,-11)--(6,0);
        \draw[dotted, thick] (8,-11)--(8,0);

        \filldraw[black] (3,-3) circle (0.1cm);
        \filldraw[black] (6,-3) circle (0.1cm);
        \filldraw[black] (8,-3) circle (0.1cm);

        \filldraw[black] (6,-7.4) circle (0.15cm);
        \filldraw[black] (8,-7.4) circle (0.15cm);

        \filldraw[black] (3,-4.9) circle (0.15cm);
        \filldraw[black] (8,-4.9) circle (0.15cm);

        \filldraw[black] (3,-9.9) circle (0.15cm);
        \filldraw[black] (6,-9.9) circle (0.15cm);

        \node[anchor=east] (A) at (0,-3) {$r$};

        \node[anchor=south] at (3,0) {$y_1$};
        \node[anchor=south] at (6,0) {$y_2$};
        \node[anchor=south] at (8,0) {$y_3$};

        \draw[blue, thick] (10.4,-2.5) -- (10.4,-4.5)
        node[pos=1.15] {$M_{j_3}$};
        \draw[blue, thick] (10.3,-2.5) --(10.5,-2.5);
        \draw[blue, thick] (10.3,-4.5) --(10.5,-4.5);

        \draw[red, thick] (10.8, -2) -- (10.8, -7)
        node[pos=1.06] {$M_{j_2}$};
        \draw[red, thick] (10.7,-2) --(10.9,-2);
        \draw[red, thick] (10.7,-7) --(10.9,-7);

        \draw[RoyalBlue, thick] (11.2,-1) -- (11.2,-9.5)
        node[pos=1.035] {$M_{j_1}$};
        \draw[RoyalBlue, thick] (11.1,-1)--(11.3,-1);
        \draw[RoyalBlue, thick] (11.1,-9.5)--(11.3,-9.5);
        
    \end{tikzpicture}
    \caption{The figure shows how to embed $A = 
\begin{bmatrix}
  1 & 0 & 1 \\
  0 & 1 & 1 \\
  1 & 1 & 0
\end{bmatrix}$ into $M$
    using 3 below-shrinking steps: we use step~$j_1$ for
    the column pair $y_1 y_2$ to embed the third row of $A$, 
    step $j_2$ for the column pair $y_2y_3$ to embed the second row of $A$, and step $j_3$ for the column pair $y_1 y_3$ to embed the first row of $A$.}
    \label{fig:matrix}
\end{figure}
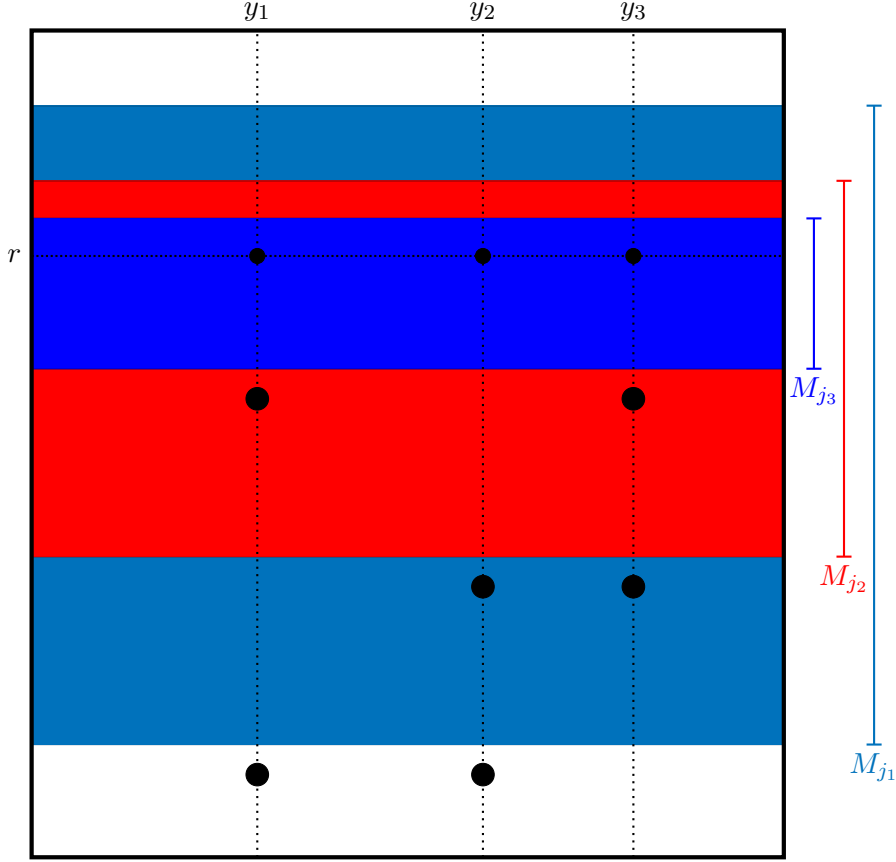

\section{Proofs of our results} \label{sec:proofs}
We now turn to the formal proofs of our results. We will use the following lemma to show that typically we have many shrinking steps.
\begin{lemma}\label{lemma_tree}
	Let $k$ and $m$ be non-negative integers with $k\geq 1$, and let $T$ be a rooted tree such that each vertex has at most $k$ children. Let us say that a vertex of $T$ is \emph{branching} if it has at least $2$ children. Then the number of (non-root) leaves of $T$ with at most $m$ branching ancestors is at most $k^m$.
\end{lemma}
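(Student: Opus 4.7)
The plan is to prove the bound via a direct injection from the set $S$ of non-root leaves with at most $m$ branching ancestors into the product set $[k]^m$. To set this up, I would first fix an arbitrary labeling of the children of each vertex by distinct elements of $[k]$ (which is possible since every vertex has at most $k$ children). For each leaf $\ell \in S$, I would list its branching ancestors in order from the root as $v_1, \ldots, v_{s_\ell}$ (so $s_\ell \leq m$), let $a_i(\ell) \in [k]$ denote the label of the child of $v_i$ lying on the root-to-$\ell$ path, and define
\[
\phi(\ell) = (a_1(\ell), \ldots, a_{s_\ell}(\ell), 1, 1, \ldots, 1) \in [k]^m,
\]
padding with $1$'s on the right to reach length $m$.

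The crux will be the injectivity of $\phi$. Given distinct $\ell, \ell' \in S$, I would consider their lowest common ancestor $v$ in $T$: since $\ell$ and $\ell'$ lie in different subtrees below $v$, the vertex $v$ must have at least two children and is thus branching, so it is a common branching ancestor of both leaves. Because the root-to-$v$ paths coincide for the two leaves, the list of branching ancestors up to and including $v$ is identical, so $v$ occupies the same index $i$ in both sequences $v_1, \ldots, v_{s_\ell}$ and $v_1', \ldots, v_{s_{\ell'}}'$. At $v$ the two paths branch off to different children, giving $a_i(\ell) \neq a_i(\ell')$; since $i \leq \min(s_\ell, s_{\ell'}) \leq m$, this discrepancy lies inside the unpadded portion of both tuples, and hence $\phi(\ell) \neq \phi(\ell')$.

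Injectivity would then yield $|S| \leq |[k]^m| = k^m$, as required. The only subtle point I anticipate is making sure that the padding cannot mask a genuine difference between sequences, but this is handled automatically by the observation that the distinguishing coordinate comes from the lowest common ancestor and always falls in the unpadded prefix of both tuples. The trivial edge cases ($|S| \leq 1$, or $m = 0$, or $k = 1$) require no separate treatment, as they are already covered by the injection argument.
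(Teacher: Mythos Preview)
Your proof is correct and proceeds by a genuinely different route from the paper's. The paper argues by a double induction: for $m=0$ one observes that a leaf with no branching ancestors forces $T$ to be a path; for $m\geq 1$ one inducts on $|T|$, stripping off a non-branching root if possible, and otherwise splitting $T$ at the (branching) root into at most $k$ subtrees and applying the $m-1$ bound to each. Your argument instead exhibits an explicit injection $S\hookrightarrow [k]^m$ by recording, at each branching ancestor in root-to-leaf order, which child the path takes. The key observation---that the lowest common ancestor of two distinct leaves in $S$ is itself branching and sits at the same index in both ancestor lists---cleanly establishes injectivity and makes the bound $k^m$ conceptually transparent. The paper's induction is more routine and perhaps marginally easier to check mechanically, while your encoding argument has the virtue of explaining \emph{why} $k^m$ is the right count rather than merely verifying it.
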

\begin{proof}
	The result is established easily by induction on $m$, as follows. The case $m=0$ is clear, as if a leaf has zero branching ancestors then $T$ must be a path from the leaf to the root. Now assume that $m\geq 1$, and that the result holds for smaller values of $m$.

 For a given $m\geq 1$, we proceed by induction on $|T|$. The case $|T|= 1$ is clear. For $|T|>1$, if the root $r$ is not branching, then let $r'$ be its unique child, and we are done by applying induction to $T-r$ (rooted at $r'$). Otherwise let $r_1,\dots,r_\ell$ be the children of the root $r$, with $\ell\leq k$. Then $T-r$ splits up into $\ell$ trees $T_1,\dots,T_\ell$, rooted at $r_1,\dots,r_\ell$, respectively. Moreover, a vertex of $T_i$ has at most $m$ branching ancestors in $T$ if and only if it has at most $m-1$ branching ancestors in $T_i$. By induction, the number of such non-root leaves in $T_i$ is at most $k^{m-1}$. It follows that the number of non-root leaves in $T$ with at most $m$ branching ancestors is at most $\ell k^{m-1}\leq k^m$.
\end{proof}

We are now ready to prove Theorem \ref{thm:generalwitho1} in the following equivalent form.
	
\begin{theorem} \label{thm:general}
Let $t\geq 1$ be a positive integer. Let $\epsilon>0$, and let $A$ be a zero-one matrix such that each row of $A$ contains at most $t$ $1$-entries. Then $\ex(n,A)=O(n^{2-1/t+\epsilon})$.
\end{theorem}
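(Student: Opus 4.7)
The plan is to formalize the sketch in Section~\ref{sec:outline}, extending it from the case $t=2$ to arbitrary $t$. Let $A$ be an $a \times b$ zero-one matrix with at most $t$ ones per row; without loss of generality $b \geq t$ (pad $A$ with empty columns). Suppose for contradiction that $M$ is an $A$-free $n \times n$ matrix of weight $w = n^{2-1/t+\eps}$ with $n$ large, fix $s = \lceil 4a/(t\eps)\rceil$ and $k = \lceil n^{1/s}\rceil$, and pick $r \in [n]$ uniformly at random. Let $C = \{c : M(r,c)=1\}$, set $M_0 = M$, and inductively let $M_j$ be the sub-block of $M_{j-1}$ (among $k$ equal horizontal blocks) that contains $r$. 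For a $t$-tuple $T$ of columns, let $N_j(T)$ denote the set of rows in $M_j$ with $1$-entries at every column of $T$; call step $j$ \emph{shrinking} for $T$ if $N_j(T) \subsetneq N_{j-1}(T)$, and classify it as \emph{above-} or \emph{below-shrinking} according to whether $N_{j-1}(T) \setminus N_j(T)$ contains a row above or below $M_j$ (these are not mutually exclusive).

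The key bound is that, for any $t$-tuple $T$, Lemma~\ref{lemma_tree} applied to the tree whose nodes are the sub-blocks of the iterated partition that meet $N_0(T)$ (branching factor $\leq k$, leaves indexed by $N_0(T)$, branching ancestors of the leaf $r$ being precisely the shrinking steps for $T$) gives that at most $k^{2a}$ rows of $N_0(T)$ yield fewer than $2a$ shrinking steps. Together with $\EE[|C|^t] \geq (w/n)^t = n^{t-1+t\eps}$ (by Jensen's inequality) and the fact that there are in expectation at most $n^{t-1+t\eps/2}$ tuples $T \in C^t$ with $|N_0(T)| < n^{t\eps/2}$, summing over $T$ yields that the expected number of $t$-tuples in $C^t$ that are \emph{bad} (either of low codegree or with fewer than $2a$ shrinking steps) is at most $2n^{t-1+t\eps/2}$. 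Considering $\EE[|C|^t - n^{t\eps/3}\cdot Y]$ where $Y$ is the bad-tuple count, I would pick $r$ realizing $|C|\geq n^{1-1/t+\eps/3}$ (say) and an $n^{-t\eps/3}$ fraction of bad tuples in $C^t$.

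A random subsample of $C$ of fixed size $R$ then has expected bad $t$-subset count $O(R^t n^{-t\eps/3}) = o(1)$, so for $n$ large I can pick $C''\subseteq C$ of size $R$ in which every $t$-subset is good. Taking $R$ to be the $t$-uniform hypergraph Ramsey number guaranteeing a monochromatic $b$-set in any $2\binom{s}{a}$-coloring, I color each good $t$-subset of $C''$ by (subtype, some fixed choice of $a$ shrinking steps of that subtype, which exists by pigeonhole from the $\geq 2a$ available shrinking steps); Ramsey supplies a monochromatic $C' = \{y_1<\dots<y_b\}\subseteq C''$ with subtype $\tau$ and steps $j_1 > j_2 > \dots > j_a$. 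In the below-shrinking case ($\tau = \text{below}$; the above case is symmetric), define $B_\ell\subseteq M_{j_\ell-1}$ to be the rows strictly below $M_{j_\ell}$: the $B_\ell$ are pairwise disjoint and appear in $M$ in top-to-bottom order as $B_1, B_2, \dots, B_a$. For row $\ell\in[a]$ of $A$ with $1$-entries at columns $a_{\ell,1},\dots,a_{\ell,t_\ell}$ ($t_\ell\leq t$), I extend $(y_{a_{\ell,1}},\dots,y_{a_{\ell,t_\ell}})$ to a $t$-tuple $T_\ell$ in $(C')^t$ by appending unused $y_i$'s (possible since $b\geq t$); the below-shrinking property of step $j_\ell$ for $T_\ell$ gives a row in $B_\ell$ with $1$-entries at all $t$ columns of $T_\ell$, in particular at the required $t_\ell$ of them. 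Taking these rows as the images of the rows of $A$ produces a copy of $A$ in $M$, contradicting $A$-freeness.

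The chief obstacle is the joint calibration of $s$ and the codegree threshold $n^{t\eps/2}$: it must be large enough for Lemma~\ref{lemma_tree} to force $\geq 2a$ shrinking steps on most principal paths, yet small enough to render bad $t$-tuples negligible next to $\EE[|C|^t]$, which in turn forces $s \asymp a/(t\eps)$ so that $k^{2a}=n^{2a/s}$ fits under the threshold. Once this balance is in place, the subtype pigeonhole, hypergraph Ramsey extraction, and final embedding are essentially formal.
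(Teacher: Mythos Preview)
Your proof is correct and follows essentially the same approach as the paper's: dependent random choice via a uniform row $r$, the nested block decomposition indexed by the $k$-ary digits of $r$, Lemma~\ref{lemma_tree} to bound the number of rows with few shrinking steps, and then hypergraph Ramsey on the $(\text{subtype},J)$ colouring to extract $b$ columns with a common set of $a$ same-direction shrinking steps for the embedding. The only cosmetic differences are your choice $s=\lceil 4a/(t\eps)\rceil$ versus the paper's $\lceil 4a/\eps\rceil$, your use of a random subsample (rather than a density argument) to find a clean $R$-set, and your padding of short rows to $t$-tuples at the embedding stage rather than padding $A$ itself at the outset---all of which are inessential.
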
	

\begin{proof}
	Let $A$ have $a$ rows and $b$ columns. We may assume that $\epsilon<1/10$, $b\geq t$, and each row of $A$ contains exactly $t$ $1$-entries. Let $n$ be sufficiently large, let $M$ be an $n\times n$ zero-one matrix, and assume that $M$ has weight at least $n^{2-1/t+\epsilon}$. Let $s=\lceil 4a/\epsilon\rceil$, and let $k=\lceil n^{1/s}\rceil$. (Here, as described in the proof outline, $s$ denotes the number of steps we will perform, and $k$ is the number of (horizontal) blocks we divide our matrix $M$ into in each step.) Note that $s$ depends on $\epsilon$ and $A$ but not on $n$. Our goal is to show that $M$ must contain $A$. For convenience, we label the rows of $M$  by $[0,n-1]$ instead of $[n]$ (but we label the columns of $M$ by $[n]$). For each row index $i\in[0,n-1]$, let $\sigma(i)=(\sigma_1(i),\dots,\sigma_{s}(i))$ be the $k$-ary representation of $i$, that is, $\sigma_j(i)\in[0,k-1]$ for each $i\in[0,n-1]$ and $j\in[s]$, and we have
	$$i=\sum_{j=1}^{s}\sigma_{j}(i)k^{s-j}.$$ Note that the $k$-ary representation $\sigma(i)$ of row $i$ encodes for each step in the proof outline the block that the row is contained in.
	
We pick a row label $r\in[0,n-1]$ uniformly at random. Let $C$ be the set of column labels $c$ such that $M(r,c)=1$. For each $j\in[0,s]$, let $$R_j=\{i\in[0,n-1]:\sigma_\ell(i)=\sigma_\ell(r)\textnormal{ for all }\ell\in[j]\}.$$
Note that $R_j$ is the set of rows remaining after $j$ steps, i.e., the rows of the matrix $M_j$ in the proof outline, and we have $$[0,n-1]=R_0\supseteq R_1\supseteq \dots\supseteq R_s=\{r\}.$$ Furthermore, for each (unordered) $t$-set $e=\{c_1,\dots,c_t\}$ of distinct column labels from $C$, and for each $j\in[s]$, we define $\type_j(e)$ to be `shrinking'  if there is some row index $i\in R_{j-1}\setminus R_j$ such that $M(i,c_1)=\ldots=M(i,c_t)=1$. Otherwise, we define $\type_j(e)$ to be `non-shrinking'. (Note that these definitions agree with the ones mentioned in the proof outline.)
	
	Let a $t$-set $e$ of columns from $C$ be \emph{good} if there are at least $2a$ values of $j\in[s]$ such that $\type_j(e)$ is `shrinking', and let $e$ be \emph{bad} otherwise. Let $H$ denote the set of bad $t$-sets from $C$.  Clearly, $\EE[|C|]\geq n^{1-1/t+\epsilon}$ and hence
 $\EE[\binom{|C|}{t}]=\Omega(n^{t-1+t\epsilon})$.

\begin{claim} \label{claim:bad sets are rare}
	We have $\EE[|H|]=o(\EE[\binom{|C|}{t}])$.
\end{claim}
	
\begin{proof}
	Let a $t$-set of columns $e=\{c_1,\dots,c_t\}$ from $\binom{[n]}{t}$ be \emph{light} if $|\{i\in[0,n-1]:M(i,c_1)=\ldots=M(i,c_t)=1\}|\leq n^{\epsilon/2}$, and \emph{heavy} otherwise. Note that the expected number of light column $t$-sets in $C$ is at most $n^t\cdot (n^{\epsilon/2}/n)=n^{t-1+\epsilon/2}=o(\EE[\binom{|C|}{t}])$. Thus, it suffices to show that the expected number of column $t$-sets in $C$ which are both heavy and bad is $o(\EE[\binom{|C|}{t}])$. To show this, it is enough to prove that for any heavy $t$-set of columns $e$ from $\binom{[n]}{t}$, the conditional probability $\PP(e\textnormal{ is bad} \mid e\subseteq C)$ is $o(1)$.
	
	Let us fix a heavy $t$-set $e=\{c_1,\dots,c_t\}$, and let us write $I=\{i\in[0,n-1]:A(i,c_1)=\ldots=A(i,c_t)=1\}$. Note that, conditioned on $e\subseteq C$, $r$ is a uniformly random element of $I$. Let $V$ be the set of sequences $x=(x_1,\dots,x_p)$ of length at most $s$ such that there is some $i\in I$ with $(\sigma_1(i),\dots,\sigma_p(i))=x$. We define a rooted tree $T$ on $V$ by letting $x=(x_1,\dots,x_p)$ be the parent of $y=(y_1,\dots,y_q)$ precisely when $q=p+1$ and $(y_1,\dots,y_p)=(x_1,\dots,x_p)$ (i.e., $y$ is obtained from $x$ by extending the sequence by one step). In other words, the rooted tree corresponds to a poset on the initial segments of the $k$-ary representations of the row indices in $I$, where the elements of the poset are ordered by inclusion. Note that the root of the tree is the empty sequence, the leaves are $\sigma(i)$ for $i\in I$, and each vertex has at most $k$ children. Recall that a vertex of $T$ is branching if it has at least $2$ children. Notice that for all rows $i\in I$ and  for all $j\in[s]$, if $r=i$, then $\type_j(e)$ is `shrinking' if and only if $(\sigma_1(i),\dots,\sigma_{j-1}(i))$ is branching. Thus, if $r=i$, then $e$ is bad if and only if the corresponding leaf $\sigma(i)$ has at most $2a-1$ branching ancestors. Therefore, by Lemma~\ref{lemma_tree}, there are at most $k^{2a-1}$ choices of $r$ which make $e$ bad. Since $e$ is heavy, a uniformly random element of $I$ makes $e$ bad with probability at most $$k^{2a-1}/n^{\epsilon/2}\leq \lceil n^{\epsilon/(4a)}\rceil^{2a-1}/n^{\epsilon/2}=o(1),$$
	finishing the proof of the claim.
\end{proof}

	Let $N$ be a sufficiently large integer (namely, the multicolour hypergraph Ramsey number $N=r_{t}(b;2\binom{s}{a})$) such that every $2\binom{s}{a}$-colouring of the complete $t$-uniform hypergraph $K_N^{(t)}$ on $N$ vertices contains a monochromatic $K_b^{(t)}$.  Furthermore, let $L$ be a sufficiently large positive real number so that every $t$-uniform hypergraph with sufficiently many vertices and edge density more than $1-1/L$ contains a complete $t$-uniform subgraph $K_N^{(t)}$ on $N$ vertices (for example, we can choose $L=\binom{N}{t}$). Note that $N$ and $L$ do not depend on $n$. Using the claim above, we see that $\EE[\binom{|C|}{t}-L|H|]=\Omega(n^{t-1+t\epsilon})$. Therefore, we can fix a choice of $r$ such that we have $|C|=\Omega(n^{1-1/t+\epsilon})$ and $|H|< \frac{1}{L}\binom{|C|}{t}$. Then (if $n$ is sufficiently large), by the definition of $L$, we can find a subset $C_1\subseteq C$ such that $|C_1|=N$ and each $t$-set from $C_1$ is good (i.e., $C_1$ contains no $t$-set from $H$).
	
	Whenever $e=\{c_1,\dots,c_t\}$ is a $t$-set from $C_1$ and $j\in[s]$ is such that $\type_j(e)=$`shrinking', we let $\subtype_j(e)=\ \uparrow$ if there is some row index $i\in R_{j-1}\setminus R_j$ such that $i<\min R_j$ and $M(i,c_1)=\ldots=M(i,c_t)=1$. Otherwise, we let $\subtype_j(e)=\ \downarrow$. Note that in this case there is some row index $i\in R_{j-1}\setminus R_j$ such that $i>\max R_j$ and $M(i,c_1)=\ldots=M(i,c_t)=1$.
	For each $t$-set $e$ from $C_1$, we know that there are at least $2a$ choices of $j$ with type `shrinking', thus, we can find at least $a$ choices of $j$ with the same subtype ($\uparrow$ or $\downarrow$). For each $e$, choose some $z_e\in\{\uparrow,\downarrow\}$ and a subset $J_e\subseteq [s]$ of size $a$ such that $\subtype_j(e)=z_e$ for all $j\in J_e$ (and $\type_j(e)=$`shrinking' for all $j\in J_e$). Then $e\mapsto(z_e,J_e)$ gives a $2\binom{s}{a}$-colouring of the $t$-sets from $C_1$. Hence, by the definition of $N$, we can find $C_2\subseteq C_1$ of size $b$ such that each $t$-set from $C_2$ has the same colour -- i.e., there is some $z\in\{\uparrow,\downarrow\}$ and a subset $J\subseteq [s]$ of size $a$ such that for all $t$-sets $e$ from $C_2$ we have $z_e=z$ and $J_e=J$. In particular, $\subtype_j(e)=z$ for all $e\in\binom{C_2}{t}$ and all $j\in J$.
	
	We now show that this allows us to find a copy of $A$ on column set $C_2$. In what follows we will assume that $z=\ \downarrow$, as one can deal with the case $z=\ \uparrow$ similarly. Let us label the rows of $A$ by $[a]$ and the columns by $[b]$. Furthermore, let $C_2=\{c_1,\dots,c_b\}$ with $c_1<\dots<c_b$, and let $J=\{j_1,\dots,j_a\}$ with $j_1>\dots>j_a$. 
 For each $u\in[a]$, since $\subtype_{j_u}(\{c_y:y\in[b],A(u,y)=1\})=\ \downarrow$, there is some row $i_u\in R_{j_u-1}\setminus R_{j_u}$ of $M$ such that $i_u>\max R_{j_u}$ and $M(i_u,c_y)=1$ for all $y\in [b]$ with $A(u,y)=1$. Note that these conditions imply that $i_1<\dots<i_a$, and that $M(i_u,c_y)=1$ whenever $A(u,y)=1$. Thus, the rows $i_1,\dots,i_a$ and columns $c_1,\dots,c_b$ give a copy of $A$, finishing the proof.
\end{proof}

We now turn to the proof of Theorem \ref{thm:columntpartite}. The proof is quite similar to the proof of Theorem~\ref{thm:general}, so we will just highlight the differences.

\begin{proof}[Sketch of the proof of Theorem \ref{thm:columntpartite}]
    Let $M$ be an $n\times n$ zero-one matrix with weight $\omega(n^{2-1/t})$. It suffices to prove that if $n$ is sufficiently large, then $M$ contains $A$. Let $a$ be the number of rows in $A$. Consider an arbitrary partition of $A$ using vertical cuts into $t$ submatrices such that each of these submatrices has at most one $1$-entry in each row. We may assume without loss of generality that they all have precisely one $1$-entry in each row. Let $b_1,\dots,b_t$ be the number of columns of these submatrices in the natural order. One of the main differences compared to the proof of Theorem \ref{thm:general} is that we choose $k=2$ and $s=\lceil \log_2 n\rceil$. We define the random set $C$ of columns and the set $H$ of bad $t$-sets in $C$ as in the proof of Theorem~\ref{thm:general}. Now $\mathbb{E}[|C|]=\omega(n^{1-1/t})$ and hence $\mathbb{E}[\binom{|C|}{t}]=\omega(n^{t-1})$.
    
    We claim that, similarly to Claim \ref{claim:bad sets are rare}, we have $\mathbb{E}[|H|]\leq \frac{1}{4}\cdot \mathbb{E}[\binom{|C|}{t}]$ (for every sufficiently large $n$). Let a $t$-set of columns $e=\{c_1,\dots,c_t\}$ from $\binom{[n]}{t}$ be \emph{light} if $|\{i\in[0,n-1]:M(i,c_1)=\ldots=M(i,c_t)=1\}|\leq 5\cdot 2^{2a-1}$, and \emph{heavy} otherwise. Note that the expected number of light column $t$-sets in $C$ is at most $n^t\cdot (5\cdot 2^{2a-1}/n)=5\cdot 2^{2a-1}\cdot n^{t-1}=o(\EE[\binom{|C|}{t}])$. Thus, it suffices to show that the expected number of column $t$-sets which are both heavy and bad is at most $\frac{1}{5}\cdot \EE[\binom{|C|}{t}]$. To show this, it is enough to prove that for any heavy $t$-set of columns $e$ from $\binom{[n]}{t}$, the conditional probability $\PP(e\textnormal{ is bad} \mid e\subseteq C)$ is at most $1/5$. But an identical argument to the one in the proof of Claim \ref{claim:bad sets are rare} shows that this conditional probability is at most $\frac{k^{2a-1}}{5\cdot 2^{2a-1}}=1/5$.

    The rest of the argument is fairly different from the proof of Theorem \ref{thm:general}. Since $\mathbb{E}[|H|]\leq \frac{1}{4}\cdot \mathbb{E}[\binom{|C|}{t}]$, we have $\mathbb{E}[\binom{|C|}{t}-2|H|]\geq \frac{1}{2}\cdot \mathbb{E}[\binom{|C|}{t}]=\omega(n^{t-1})$. Hence, there exists an outcome such that $|C|=\omega(n^{1-1/t})$ and $|H|\leq \frac{1}{2}\cdot \binom{|C|}{t}$.

    For each $e=\{c_1,\dots,c_t\}\in \binom{C}{t}\setminus H$, let us define $z_e\in \{\uparrow,\downarrow\}$ and $J_e\subseteq [s]$ as in the proof of Theorem~\ref{thm:general}. Then $e\mapsto (z_e,J_e)$ is a $2\binom{s}{a}$-colouring of $\binom{C}{t}\setminus H$. It follows that there exist some $z\in \{\uparrow,\downarrow\}$ and $J\subseteq [s]$ of size $a$ such that for all at least $\frac{1}{2\binom{s}{a}}\cdot (\binom{|C|}{t}-|H|)\geq \frac{1}{4\binom{s}{a}}\cdot \binom{|C|}{t}$ sets $e\in \binom{C}{t}\setminus H$, we have $z_e=z$ and $J_e=J$. Now if $n$ is sufficiently large, then by an ordered analogue of the Erd\H os box theorem (see, e.g., Lemma 4 in \cite{methuku2022bipartite}), there exist sets $C_1,\dots,C_t\subseteq C$ of columns such that $|C_i|=b_i$ for every $i\in [t]$, every column in $C_i$ comes before every column in $C_{i+1}$ (for all $i\in [t-1]$), and for each $e=\{c_1,\dots,c_t\}$ which satisfies $c_i\in C_i$ for all $i\in [t]$, we have $z_e=z$ and $J_e=J$. Then, since $A$ is column-$t$-partite, we can embed $A$ into $M$ using the set $C_1\cup \dots \cup C_t$ of columns, similarly as in the proof of Theorem \ref{thm:general}.
\end{proof}

\noindent \textbf{Acknowledgements. } Our work on this paper started during the 13/14th Emléktábla Workshop, held in Vác, 13--18~August, 2023. We are grateful to Kristóf Bérczi,
Zoltán Lóránt Nagy and 
Balázs Patkós for organizing the workshop. We are also very grateful to Ervin Győri for many helpful discussions during the workshop.

\bibliographystyle{abbrv}
\bibliography{Bibliography}

\end{document}